\documentclass[12pt]{amsart}

\usepackage{amssymb,latexsym}

\usepackage{enumerate}

\makeatletter

\@namedef{subjclassname@2010}{

  \textup{2010} Mathematics Subject Classification}

\makeatother
\newtheorem{thm}{Theorem}[section]

\newtheorem{lem}[thm]{Lemma}

\theoremstyle{definition}

\numberwithin{equation}{section}

\newcommand{\X}{\mathbb{X}}
\newcommand{\Y}{\mathbb{Y}}
\newcommand{\ex}{\mathbb{E}}
\newcommand{\re}{\textup{Re}}

\newcommand{\F}{\mathcal{F}}
\newcommand{\D}{\mathcal{D}}
\newcommand{\sums}{\sideset{}{^\flat}\sum}

\frenchspacing

\textwidth=15.5cm

\textheight=23cm

\parindent=16pt

\oddsidemargin=0cm

\evensidemargin=0cm

\topmargin=-0.5cm

\begin{document}

\baselineskip=17pt

\title[The number of imaginary quadratic fields with a given class number]
{On the average of the number of imaginary quadratic fields with a given class number}

\author[Youness Lamzouri]{Youness Lamzouri}

\address{Department of Mathematics and Statistics,
York University,
4700 Keele Street,
Toronto, ON,
M3J1P3
Canada}

\email{lamzouri@mathstat.yorku.ca}

\date{}

\begin{abstract}  
Let $\F(h)$ be the number of imaginary quadratic fields with class number $h$. In this note,  we improve the error term in Soundararajan's asymptotic formula for the average of $\F(h)$. Our argument leads to a similar refinement of the asymptotic for the average of $\F(h)$ over odd $h$, which was recently obtained by Holmin, Jones,  Kurlberg, McLeman and  Petersen. 
\end{abstract}

\subjclass[2010]{Primary  11R29; Secondary 11R11, 11M20}

\thanks{ The author is partially supported by a Discovery Grant from the Natural Sciences and Engineering Research Council of Canada.}

\maketitle

\section{Introduction}

An important problem in number theory, which goes back to Gauss,  is to determine all imaginary quadratic fields with a given class number. Let $\F(h)$ be the number of imaginary quadratic fields with class number $h$. Then for instance one has $\F(1)=9$, which follows from the celebrated 
solution of Baker-Stark-Heegner to Gauss' class
number $1$ problem for imaginary quadratic fields. In \cite{So}, Soundararajan studied the quantity $\F(h)$ and determined its average order. More precisely, he proved that for any $ \epsilon>0$
\begin{equation}\label{SoundAsymp} 
\sum_{h\leq H} \F(h)= \frac{3\zeta(2)}{\zeta(3)} H^2 + O_{\epsilon}\left(\frac{H^2}{(\log H)^{1/2-\epsilon}}\right). 
\end{equation}
The purpose of this note is to improve the error term in this asymptotic formula. 
\begin{thm}\label{Sound} We have
$$\sum_{h\leq H} \F(h)= \frac{3\zeta(2)}{\zeta(3)} H^2 + O\left(\frac{H^2(\log \log H)^3}{\log H}\right). $$

\end{thm}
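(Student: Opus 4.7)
By the class number formula $h(-d) = \sqrt{d}\,L(1,\chi_{-d})/\pi$, valid for any fundamental discriminant $-d$ with $d > 4$,
$$\sum_{h \leq H}\F(h) \;=\; \#\bigl\{d > 4 : -d \text{ fund.\ disc.}, \ L(1,\chi_{-d}) \leq \pi H/\sqrt{d}\bigr\} + O(1),$$
and the bounds $(\log d)^{-1} \ll L(1,\chi_{-d}) \ll \log d$ confine the relevant $d$ to the window $[H^2 (\log H)^{-C}, H^2 (\log H)^{C}]$, with negligible contribution outside. The core input will be a sharp distribution estimate: uniformly for $\tau$ in the relevant compact range,
$$\#\{d \leq X : -d \text{ fund.}, \ L(1,\chi_{-d}) \leq \tau\} \;=\; \frac{3X}{\pi^2}\,G(\tau) + O\!\left(\frac{X(\log\log X)^3}{\log X}\right),$$
where $G(\tau) := \Pr[L(1,\X) \leq \tau]$ is the CDF of the random Euler product $L(1,\X) = \prod_p(1 - \X(p)/p)^{-1}$ with $\X(p)$ independent and distributed according to the natural model for $\chi_{-d}(p)$ on fundamental discriminants ($\Pr[\X(p) = 0] = 1/(p+1)$ and $\Pr[\X(p) = \pm 1] = p/(2(p+1))$ for odd primes).

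Given this estimate, the plan is to partition the $d$-range dyadically into blocks $(X/2, X]$ and apply the estimate in each block with $\tau = \pi H/\sqrt{d}$; the slow $d$-dependence of $\tau$ is absorbed by summation by parts. Summing over dyadic blocks and changing variables $u = \pi H/\sqrt{x}$, followed by integration by parts, yields
$$\sum_{h \leq H}\F(h) \;=\; 3 H^2 \, \ex[L(1,\X)^{-2}] + O\!\left(\frac{H^2 (\log\log H)^3}{\log H}\right).$$
A local computation gives $\ex[(1-\X(p)/p)^2] = (p^2+p+1)/(p(p+1)) = (1-1/p^3)/(1-1/p^2)$, hence $\ex[L(1,\X)^{-2}] = \prod_p (1-1/p^3)/(1-1/p^2) = \zeta(2)/\zeta(3)$, recovering the main term $3\zeta(2)H^2/\zeta(3)$.

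The hard step is establishing the distribution estimate with the $(\log\log X)^3/\log X$ error -- a gain of a full power of $\log X$ over the bound implicit in Soundararajan's \eqref{SoundAsymp}. The framework (\`a la Granville-Soundararajan) is to approximate $L(1,\chi_{-d})$ by the truncated Euler product $L_y(\chi_{-d}) = \prod_{p \leq y}(1 - \chi_{-d}(p)/p)^{-1}$, whose value depends on $d$ only modulo $Q := 4 \prod_{p \leq y} p$, and to count fundamental discriminants in each residue class mod $Q$ via sieving and character-sum estimates. Two competing constraints on $y$ collide: larger $y$ shrinks the truncation discrepancy $|L(1,\chi_{-d}) - L_y(\chi_{-d})|$ but enlarges $Q \approx e^y$ and hence the character-sum error, forcing a balance at $y \asymp \log X$. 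Attaining the $(\log\log X)^3/\log X$ savings requires a refined mean-square bound on the tail $\log L(1,\chi_{-d}) - \log L_y(\chi_{-d})$ combined with careful control of the boundary layer $\{d : L(1,\chi_{-d}) \in [\tau - \delta, \tau + \delta]\}$ through the regularity of $G$ near the cutoff; this refined analysis is the technical heart of the argument.
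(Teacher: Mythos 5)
Your reduction via the class number formula and the closing computation $\ex(L(1,\X)^{-2})=\prod_p(1-p^{-3})/(1-p^{-2})=\zeta(2)/\zeta(3)$ are fine, but the entire content of the theorem is packed into your asserted distribution estimate with additive error $O(X(\log\log X)^3/\log X)$, which you neither prove nor sketch a viable route to. The route you do sketch cannot reach that precision: keeping $Q=4\prod_{p\le y}p\approx e^{y}$ at most a small power of $X$ forces $y\ll\log X$, and then the tail $\log L(1,\chi_{-d})-\log L_y(\chi_{-d})$ is typically of size about $\bigl(\sum_{p>y}p^{-2}\bigr)^{1/2}\asymp (\log X\,\log\log X)^{-1/2}$, so the boundary layer $\{d:\ L(1,\chi_{-d})\in[\tau-\delta,\tau+\delta]\}$ already costs about $X(\log X\,\log\log X)^{-1/2}$ --- precisely the square-root-of-$\log$ barrier behind Soundararajan's original error term $(\log H)^{1/2-\epsilon}$, which this theorem is designed to beat. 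The paper never proves a sharp pointwise distribution estimate; instead it keeps the smoothing on the $H$-side: it replaces Perron's kernel by $\left(\frac{e^{\lambda s}-1}{\lambda s}\right)^N$, whose decay like $|s|^{-N-1}$ allows truncating the contour at $|s|\le T=\log X/(10^4(\log\log X)^2)$, the full range in which the complex moments of $h(-d)^{-s}$ over fundamental discriminants $-d\le X$ are known (Soundararajan's equation (5), resting on Granville--Soundararajan), and then chooses $\lambda=10/T$, $N\asymp\log\log H$, so that the only loss is the smoothing window of width $e^{\lambda N}-1\asymp(\log\log H)^3/\log H$. That mechanism, absent from your proposal, is where the full power of $\log H$ is gained.

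There are also two problems in your assembly even granting the key estimate. First, the lower bound $L(1,\chi_{-d})\gg(\log d)^{-1}$ you use to confine $d$ to $[H^2(\log H)^{-C},H^2(\log H)^{C}]$ is not an available unconditional fact as stated; the paper instead cuts at $X=H^2\log\log H$ using Tatuzawa's effective Siegel-type theorem (with its one exceptional modulus) together with Theorem 4 of Granville--Soundararajan on the probability of very small values of $L(1,\chi_{-d})$. Second, your bookkeeping over dyadic blocks fails quantitatively: an additive error $O(X'(\log\log X')^3/\log X')$ applied on each block up to $X'=H^2(\log H)^{C}$ sums to an error of order $H^2(\log H)^{C-1}(\log\log H)^3$, which swamps the target $H^2(\log\log H)^3/\log H$; for the large-$d$ blocks (small $\tau$) you would need a relative-error version of the distribution estimate, or the much tighter cutoff the paper uses. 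So as written the argument has genuine gaps both in its central lemma and in how the lemma would be assembled into the theorem.
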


In a recent work  \cite{HJKMP},  Holmin, Jones, Kurlberg, Mcleman and Petersen studied statistics of the class numbers of imaginary quadratic fields. In particular, they used the Cohen-Lenstra heuristics together with the work of Granville and Soundararajan \cite{GrSo} on the distribution of values of $L(1,\chi_d)$ to formulate a conjecture on the asymptotic nature of $\F(h)$ as $h\to \infty$ through odd values.  They also obtained the analogue of \eqref{SoundAsymp} for the average of $\F(h)$ over odd values of $h$, conditionally on the generalized Riemann hypothesis GRH. More precisely, they showed that assuming GRH
\begin{equation}\label{HJKMPAsymp}
\sum_{\substack{h\leq H\\ h \text{ odd }}} \F(h)= \frac{15}{4} \frac{H^2}{\log H} +  O_{\epsilon}\left(\frac{H^2}{(\log H)^{3/2-\epsilon}}\right).
\end{equation}
Unlike \eqref{SoundAsymp} which is unconditional, the proof of \eqref{HJKMPAsymp} uses GRH to bound a certain character sum over primes,  which appears in this case due to the fact that when $d>8$,  the class number of $\mathbb{Q}(\sqrt{-d})$ is odd precisely when $d$ is prime, by genus theory. 

The same argument in our proof of Theorem \ref{Sound} leads to the following refinement of the asymptotic formula \eqref{HJKMPAsymp}. 
 \begin{thm}\label{HJKMP} Assume GRH. Then
 $$\sum_{\substack{h\leq H\\ h \text{ odd }}} \F(h)= \frac{15}{4} \frac{H^2}{\log H} + 
 O\left(\frac{H^2(\log \log H)^3}{(\log H)^2}\right). $$
 \end{thm}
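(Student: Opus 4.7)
The plan is to follow the proof of Theorem~\ref{Sound} with the modifications needed to restrict to odd class numbers. The starting point is genus theory: for a squarefree integer $d>8$, the class number of $\mathbb{Q}(\sqrt{-d})$ is odd if and only if $d$ is an odd prime. Hence, writing $h_p$ for the class number of $\mathbb{Q}(\sqrt{-p})$,
$$\sum_{\substack{h\leq H\\ h\text{ odd}}}\F(h) \;=\; \#\{p\text{ prime}: h_p\leq H\} + O(1).$$
Dirichlet's class number formula gives $h_p = c_p\sqrt{p}\,L(1,\chi_p)/\pi$, where $c_p\in\{1,1/2\}$ and $\chi_p$ is the Kronecker symbol of the fundamental discriminant of $\mathbb{Q}(\sqrt{-p})$ (that is, $-p$ when $p\equiv 3\!\pmod 4$ and $-4p$ when $p\equiv 1\!\pmod 4$). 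The task thereby reduces to counting primes $p\leq X$, for some $X\asymp H^2(\log\log H)^{O(1)}$, subject to $L(1,\chi_p)\leq \pi H/(c_p\sqrt{p})$.

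Next I would approximate $L(1,\chi_p)$ by the short Euler product
$$L(1,\chi_p;y) \;:=\; \prod_{q\leq y}\Bigl(1-\frac{\chi_p(q)}{q}\Bigr)^{-1},$$
with $y$ a suitable power of $\log H$, exactly as in the proof of Theorem~\ref{Sound}. Under GRH, $\log L(1,\chi_p)$ is well approximated by $\log L(1,\chi_p;y)$ outside an exceptional set of primes of negligible density. The distribution of this truncated product over primes $p\leq X$ is then captured through its moments
$$\sum_{p\leq X} L(1,\chi_p;y)^k \;=\; \pi(X)\cdot\ex\bigl[L(1,\chi;y)^k\bigr] + (\text{error}),$$
where the expectation is taken in the Granville--Soundararajan probabilistic model \cite{GrSo}. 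Expanding the $k$-th power into a sum over squarefree integers $n$ with prime factors at most $y$ produces character sums $\sum_{p\leq X}\chi_p(n)$; by quadratic reciprocity these reduce to sums $\sum_{p\leq X}(n/p)$, which under GRH are bounded by $O_n(\sqrt{X}\log(nX))$. This is the sole role of GRH in the argument, taking the place of the unconditional Pólya--Vinogradov-type bound used in the proof of Theorem~\ref{Sound}.

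Combining this distributional information with the prime number theorem on $[1,X]$ produces the main term $(15/4)H^2/\log H$, the factor $1/\log H$ reflecting the density of primes in the relevant range. The refined error $H^2(\log\log H)^3/(\log H)^2$ then follows by the same calibration as in Theorem~\ref{Sound}: one chooses $y$ and the number of usable moments $k$ (typically $k\asymp \log H/(\log\log H)^2$) so that the Euler product truncation error, the GRH character sum error, and the exceptional set contribution are simultaneously negligible. The main obstacle is precisely this calibration: the GRH prime character sum bound must remain effective uniformly for $n\leq y^k$, and the distributional estimate for $L(1,\chi_p;y)$ must be uniform enough to sharpen the error from $(\log H)^{-3/2+\epsilon}$ in \cite{HJKMP} to $(\log\log H)^3/(\log H)^2$. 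Once the argument proving Theorem~\ref{Sound} is run in parallel with these prime character sum inputs, the stated error term falls out.
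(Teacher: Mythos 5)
Your overall route is the paper's: reduce to prime discriminants by genus theory, obtain uniform moments of $L(1,\chi_p)$ over primes under GRH (this is where GRH enters, exactly as you say, via character sums over primes), and then run the smooth-cutoff machinery of Theorem \ref{Sound}. The paper simply quotes the GRH moment formula of Holmin--Jones--Kurlberg--McLeman--Petersen (their Theorem 3.3, leading to \eqref{Moments2}) rather than re-deriving it from truncated Euler products, and your proposed uniformity range $k\asymp \log H/(\log\log H)^2$ matches the $T$ used there; so that part is fine, if left largely as an assertion.

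However, there is a genuine error at the very first step. It is not true that the class number of $\mathbb{Q}(\sqrt{-d})$ is odd if and only if $d$ is an odd prime: when $p\equiv 1\pmod 4$ the fundamental discriminant is $-4p$, which has two prime discriminant factors, so by genus theory the $2$-rank of the class group is $1$ and the class number is \emph{even} (e.g.\ $h(\mathbb{Q}(\sqrt{-5}))=2$, $h(\mathbb{Q}(\sqrt{-17}))=4$). The correct statement, used in the paper and in equation (3.6) of \cite{HJKMP}, restricts to primes $p\equiv 3\pmod 4$, i.e.\ to the set $\D(X)$. Your opening identity $\sum_{h\leq H,\,h\ \mathrm{odd}}\F(h)=\#\{p:\ h_p\leq H\}+O(1)$ therefore overcounts by the number of primes $p\equiv 1\pmod 4$ with $h(-4p)\leq H$, which is of size $\asymp H^2/\log H$ --- the same order as the main term --- so carrying your reduction through would produce a wrong leading constant, not just a worse error term. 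Relatedly, you never verify the constant $15/4$: it arises from the density $\tfrac12$ of primes $p\equiv 3\pmod 4$, the factor $\log X\sim 2\log H$ from counting primes up to $\pi^2H^2/L(1,\chi_p)^2$, and $\pi^2\,\ex\bigl(L(1,\Y)^{-2}\bigr)=\pi^2\zeta(2)/\zeta(4)=15$, where $\Y(p)=\pm 1$ with equal probability is the correct random model for prime discriminants (not the three-valued model $\X$ of Theorem \ref{Sound}). With the reduction corrected to $p\in\D(X)$ and this model in place, the rest of your outline does coincide with the paper's proof.
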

The main ingredients in the proofs of \eqref{SoundAsymp} and \eqref{HJKMPAsymp} are asymptotic formulas for the complex moments of the class number $h(d)$ in a large uniform range (see \eqref{Moments} and \eqref{Moments2} below). Using this approach, the best saving one can hope for in the error terms of Theorems \ref{Sound} and \ref{HJKMP} will be $1/L$, if one can obtain an asymptotic formula for the average of $h(d)^s$, uniformly in $s$ such that $|s|\leq L$. It is also known (see \cite{GrSo}) that the current methods for computing these moments fail when $L\geq (\log H)(\log\log H)^2$. This shows that the saving of $(\log H)/(\log\log H)^3$ in the error terms of Theorems \ref{Sound} and \ref{HJKMP} constitute (up to the power of $\log\log H$) the limit of Soundararajan's method  \cite{So}.  In particular, it would be interesting to improve the power of $\log H$ in the error terms of these results.  


\section{Proofs of Theorems \ref{Sound} and \ref{HJKMP}}

 Let $X:= H^2 \log\log H$. As in \cite{So}, it follows from Theorem 4 of \cite{GrSo} (concerning the distribution of extreme values of $L(1,\chi_d)$) together with Tatuzawa's refinement of the Landau-Siegel Theorem \cite{Ta}  that 
\begin{equation}\label{Main}
 \sum_{h\leq H} \F(h)= \sums_{\substack{d\leq X\\ h(-d)\leq H}}1 +O_A\left(\frac{H^2}{(\log H)^A}\right),
 \end{equation}
for any $A>0$, where $\flat$ indicates that the sum is over fundamental discriminants $-d$.

 To estimate the main term in \eqref{Main}, Soundararajan used the following variant of Perron's formula $$ \frac{1}{2\pi i} \int_{c-i\infty}^{c+i\infty} \frac{x^s}{s} \left(\frac{(1+\delta)^{s+1}-1}{\delta(s+1)}\right)ds=\begin{cases} 1 & \text{ if } x\geq 1,\\
(1+\delta-1/x)/\delta & \text{ if } (1+\delta)^{-1}\leq x\leq 1, \\ 0 & \text{ if } 0< x\leq (1+\delta)^{-1}.
\end{cases}$$ 
Our improvement comes from using a different smooth cut-off function, namely
$$
I_{c, \lambda, N}(y):=\frac{1}{2\pi i}\int_{c-i\infty}^{c+i\infty} y^s \left(\frac{e^{\lambda s}-1}{\lambda s}\right)^N\frac{ds}{s},
$$
where $c,\lambda>0$ are real numbers and $N$ is a positive integer. We prove
\begin{lem}\label{SmoothPerron}
Let $\lambda, c >0$ be real numbers and $N$ be a positive integer. Then we have 
$$
I_{c, \lambda, N}(y)
\begin{cases}=1 & \text{ if } y>1, \\ \in [0,1] & \text{ if } e^{-\lambda N } \leq y \leq 1,\\
 =0 & \text{ if } 0<y< e^{-\lambda N }. \end{cases}
$$
\end{lem}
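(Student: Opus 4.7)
The plan is to rewrite the integrand so that it reduces to the classical Perron discontinuous integral, after which Lemma~\ref{SmoothPerron} becomes a volume computation on the cube $[0,1]^N$.

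First I would record the identity
\[
\left(\frac{e^{\lambda s}-1}{\lambda s}\right)^{\!N}
=\int_{[0,1]^N} e^{\lambda s(u_1+\cdots+u_N)}\,du_1\cdots du_N,
\]
obtained by writing $\frac{e^{\lambda s}-1}{\lambda s}=\int_0^1 e^{\lambda u s}\,du$ and multiplying out. Substituting this into the definition of $I_{c,\lambda,N}(y)$ after first truncating the contour to $[c-iT,c+iT]$ (where the integrand is jointly absolutely integrable in $(s,\mathbf{u})$), Fubini gives
\[
\frac{1}{2\pi i}\int_{c-iT}^{c+iT} y^s\left(\frac{e^{\lambda s}-1}{\lambda s}\right)^{\!N}\frac{ds}{s}
=\int_{[0,1]^N} J_T\bigl(y e^{\lambda(u_1+\cdots+u_N)}\bigr)\,du_1\cdots du_N,
\]
where $J_T(x):=\frac{1}{2\pi i}\int_{c-iT}^{c+iT} x^s\,ds/s$ is the truncated Perron kernel.

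Next I would let $T\to\infty$. On the left, the crude estimate $|e^{\lambda s}-1|\le e^{\lambda c}+1$ on the line $\mathrm{Re}(s)=c$ shows the integrand decays like $|s|^{-(N+1)}$, so the truncated integrals converge absolutely to $I_{c,\lambda,N}(y)$. On the right, the standard effective Perron bound yields $J_T(x)\to\mathbf{1}_{x>1}$ as $T\to\infty$ (for $x\ne 1$) together with a uniform dominant $|J_T(x)|\le 1+x^c$; on the cube the argument $y e^{\lambda(u_1+\cdots+u_N)}$ stays bounded, so the dominant is integrable and dominated convergence applies, giving
\[
I_{c,\lambda,N}(y)=\mathrm{Vol}\bigl\{\mathbf{u}\in[0,1]^N:\, u_1+\cdots+u_N>-\log y/\lambda\bigr\}.
\]

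The three cases of the lemma now read off at once: if $y>1$ then $-\log y/\lambda<0$ so the whole cube qualifies and the volume is $1$; if $y<e^{-\lambda N}$ then $-\log y/\lambda>N$ so the set is empty and the volume is $0$; and for $e^{-\lambda N}\le y\le 1$ the threshold lies in $[0,N]$, so the volume is trivially in $[0,1]$.

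The step that needs the most care is the Fubini interchange: on the full vertical line the joint integrand is not absolutely integrable in $(s,\mathbf{u})$, because after pulling out the exponential factor only the $1/s$ from the Perron kernel survives and $1/|s|$ is not integrable at infinity. Handling this by first truncating at height $T$ and then appealing to dominated convergence via the uniform bound on $J_T(x)$ is the only delicate point; everything else is an elementary volume computation.
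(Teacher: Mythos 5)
Your proof is correct and follows essentially the same route as the paper: both write $\bigl(\tfrac{e^{\lambda s}-1}{\lambda s}\bigr)^N$ as an $N$-fold integral of exponentials, interchange the order of integration, and reduce to the classical Perron integral, so that the three cases follow from the sign of $\log y+\lambda(u_1+\cdots+u_N)$ on the cube. Your treatment of the interchange (truncating at height $T$ and passing to the limit by dominated convergence) is in fact more careful than the paper's, which applies the swap directly.
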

\begin{proof}
First, we recall Perron's formula
\begin{equation}\label{Perron}
\frac{1}{2\pi i}\int_{c-i\infty}^{c+i\infty} y^s \frac{ds}{s} = \begin{cases}1 & \text{ if } y> 1, \\ \frac{1}{2} & \text{ if } y = 1,\\
0 & \text{ if } 0<y<1. \end{cases}
\end{equation}
Then, we observe that
$$\frac{1}{2\pi i}\int_{c-i\infty}^{c+i\infty} y^s \left(\frac{e^{\lambda s}-1}{\lambda s}\right)^N\frac{ds}{s}
= \frac{1}{\lambda^N}\int_{0}^{\lambda}\cdots \int_0^{\lambda} \frac{1}{2\pi i} \int_{c-i\infty}^{c+i\infty}
\left(ye^{t_1+ \cdots+ t_N}\right)^s\frac{ds}{s} dt_1\cdots dt_N.
$$
By \eqref{Perron}, $\frac{1}{2\pi i} \int_{c-i\infty}^{c+i\infty}
\left(ye^{t_1+ \cdots+ t_N}\right)^s\frac{ds}{s}\in  [0, 1]$ for all values of $t_i$, and hence $I_{c, \lambda, N}(y) \in [0, 1]$ for all $y>0$. The lemma follows from \eqref{Perron} upon noting that $ye^{t_1+ \cdots+ t_N}>1$ for all $t_i\in [0, \lambda]$ if $y>1$, and $ye^{t_1+ \cdots+ t_N}<1$ for all $t_i\in [0, \lambda]$ if  $ 0<y< e^{-\lambda N}$. 
\end{proof}

\begin{proof}[Proof of Theorem \ref{Sound}]
Let $c=1/\log H$, $N$ be a positive integer, and $0<\lambda \leq 1$ be a real number to be chosen later. By \eqref{Main} and Lemma \ref{SmoothPerron} we obtain 
\begin{equation}\label{Bounds}
\sum_{h\leq H} \F(h)
\leq \frac{1}{2\pi i}\int_{c-i\infty}^{c+i\infty}\sums_{d\leq X}\frac{H^s}{h(-d)^s}\left(\frac{e^{\lambda s}-1}{\lambda s}\right)^N\frac{ds}{s} +O_A\left(\frac{H^2}{(\log H)^A}\right)\leq \sum_{h\leq e^{\lambda N}H} \F(h).
\end{equation}

Let $T:= \log X/(10^4(\log\log X)^2).$ Then, it follows from equation (5) of \cite{So} that
\begin{equation}\label{Moments}
 \sums_{d\leq X} h(-d)^{-s}= 3\pi^{s-2}\cdot \ex\left(L(1,\X)^{-s}\right)\int_1^X x^{-s/2}dx+ 
 O\left(X\exp\left(-\frac{\log X}{5\log\log X}\right)\right),
\end{equation}
for all complex numbers $s$ with $\re(s)=c$ and $|s|\leq T$,
where 
\begin{equation}\label{EulerProduct}
L(1, \X)= \prod_{p} \left(1-\frac{\X(p)}{p}\right)^{-1},
\end{equation}
and $\{\X(p)\}$ is a sequence of independent random variables taking the value $1$ with probability $p/(2(p+1))$, $0$ with probability $1/(p+1)$, and $-1$ with probability $p/(2(p+1))$.
Note that $\ex(\X(p))=0$ and $\ex(\X(p)^2)\leq 1$, and hence the random product \eqref{EulerProduct} converges almost surely, by Kolmogorov's three series theorems. 

Since  $|e^{\lambda s}-1|\leq 3$ (if $H$ is large enough) and $h(-d)\geq 1$, it follows that the contribution of the region $|s|>T$ to the integral in \eqref{Bounds} is
$$
\ll X\left(\frac{3}{\lambda}\right)^N \int_{\substack{|s|> T\\ \re(s)=c}} \frac{|ds|}{|s|^{N+1}}\ll \frac{X}{N}\left(\frac{3}{\lambda T}\right)^N.
$$
Moreover, note that $|(e^{\lambda s}-1)/\lambda s|\leq 3$, if $H$ is large enough. Therefore, if follows from \eqref{Moments} that the integral in \eqref{Bounds} equals
\begin{equation}\label{MTerm}
\frac{1}{2\pi i}\int_{\substack{|s|\leq T\\ \re(s)=c}} \frac{3}{\pi^2}\cdot \ex\left(L(1,\X)^{-s}\right)\left(\int_1^X x^{-s/2}dx\right)
 (\pi H)^s \left(\frac{e^{\lambda s}-1}{\lambda s}\right)^N\frac{ds}{s} + \mathcal{E},\end{equation}
 where 
 $$\mathcal{E}\ll \frac{X}{N}\left(\frac{3}{\lambda T}\right)^N+ \frac{3^N T }{c} X\exp\left(-\frac{\log X}{5\log\log X}\right).$$
 Choosing $\lambda = 10/T$ and $N=[A\log\log H]$, where $A>1$ is a constant, implies that\begin{equation}\label{ErrorCut}
\mathcal{E}\ll_A \frac{H^2}{(\log H)^A}.
\end{equation}
Furthermore, extending  the main term of \eqref{MTerm} to $\int_{c-i\infty}^{c+i\infty}$ shows that this integral equals
\begin{equation}\label{MTerm2}
\begin{aligned}
 &\frac{1}{2\pi i}\int_{c-i\infty}^{c+i\infty} \frac{3}{\pi^2}\cdot \ex\left(L(1,\X)^{-s}\right)\left(\int_1^X x^{-s/2}dx\right)
 (\pi H)^s \left(\frac{e^{\lambda s}-1}{\lambda s}\right)^N\frac{ds}{s} \\
 & \quad  \quad  + O_A\left( \ex\left(L(1,\X)^{-c}\right)\frac{X}{N}\left(\frac{3}{\lambda T}\right)^N\right) \\
 =& \frac{3}{\pi^2}\cdot \ex\left(\int_1^X I_{c, \lambda, N} \left(\frac{\pi H }{\sqrt{x}}L(1, \X)^{-1}\right)dx\right)+ 
 O_A\left(\frac{H^2}{(\log H)^A}\right).
 \end{aligned}
 \end{equation}
 
 Now, it follows from Lemma \ref{SmoothPerron} that for any $1\leq x\leq X$ we have
$$ I_{c, \lambda, N} \left(\frac{\pi H }{\sqrt{x}}L(1, \X)^{-1}\right) \begin{cases} = 1 & \text{ if } \sqrt{x}L(1, \X)\leq \pi H , \\ \in [0,1] & \text{ if } \pi H< \sqrt{x}L(1, \X)\leq e^{\lambda N }\pi H,\\
= 0 & \text{ if } \sqrt{x} L(1, \X)> \pi H e^{\lambda N }. \end{cases}$$
Thus we obtain
\begin{equation}\label{Expectation}
\begin{aligned}
 \ex\left(\int_1^X I_{c, \lambda, N} \left(\frac{\pi H }{\sqrt{x}}L(1, \X)^{-1}\right)dx\right) 
&=\ex\left(\min\left( \frac{\pi^2 H^2}{L(1,\X)^2}, X\right) +O\left(\frac{H^2 (e^{2\lambda N}-1)}{L(1,\X)^2}\right)\right)\\
&= \ex\left(\min\left( \frac{\pi^2 H^2}{L(1,\X)^2}, X\right)\right)+ O\left(\frac{H^2(\log\log H)^3}{\log H}\right).
\end{aligned}
\end{equation}

Finally, using Proposition 1 of \cite{GrSo} (which states that the probability that $L(1,\X)\leq \pi^2/(6e^{\gamma}\tau)$ is $\exp(- e^{\tau-C_1}/\tau (1+o(1))$ for some explicit constant $C_1$), we obtain
\begin{align*}
 \ex\left(\min\left( \frac{\pi^2 H^2}{L(1,\X)^2}, X\right)\right)
 &= \pi^2 H^2 \cdot \ex\left(L(1,\X)^{-2}\right)+ O_A\left(\frac{H^2}{(\log H)^A}\right)\\
 &= \frac{\pi^2\zeta(2)}{\zeta(3)}H^2+ O_A\left(\frac{H^2}{(\log H)^A}\right).\\
\end{align*}
 Combining this estimate with equations \eqref{Bounds}, \eqref{MTerm}--\eqref{Expectation}, and noting that $(He^{\lambda N})^2-H^2 \ll H^2(\log\log H)^3/\log H$ completes the proof. 
 \end{proof}
 
 \begin{proof}[Proof of Theorem \ref{HJKMP}]
 Let $X$, $T$ and $c$ be as in the proof of Theorem \ref{Sound}. Let $\D(x)= \{ p\leq x : p\equiv 3\pmod 4\}$. Then similarly to \eqref{Main}, one has (see equation 3.6 of \cite{HJKMP})
$$
 \sum_{\substack{h\leq H \\ h \text{ odd }}} \F(h)= \sum_{\substack{p \in  \D(X)\\ h(-p)\leq H}}1 +O_A\left(\frac{H^2}{(\log H)^A}\right).
$$

 Let $\{\Y(p)\}$ be independent random variables taking the values $1$ and $-1$ with equal probabilities $1/2$, and define 
 $$
 L(1,\Y)=\prod_{p}\left(1-\frac{\Y(p)}{p}\right)^{-1}.
 $$
 
 To obtain \eqref{HJKMPAsymp},  the authors of \cite{HJKMP} prove that assuming GRH (see Theorem 3.3 of \cite{HJKMP}) we have
 $$ \sum_{p\in \D(x)} L(1,\chi_p)^z=  |\D(x)| \cdot \ex\left(L(1, \Y)^z\right) +O_{\epsilon}\left(x^{1/2+\epsilon}\right),$$
uniformly for all complex numbers $z$ such that $|z|\leq (\log x)/(500 (\log\log x)^2)$, where $L(s, \chi_p)$ is the Dirichlet $L$-function attached to the Kronecker symbol $\chi_p=\left(\frac{-p}{\cdot}\right)$. Then, by partial summation together with Dirichlet's class number formula, they deduced that (see \cite{HJKMP}, p. 19)
\begin{equation}\label{Moments2} \sum_{p\in \D(X)} h(-p)^{-s}= \pi^s  \cdot \ex\left(L(1, \Y)^s\right) \int_1^X x^{-s/2}d |\D(x)|+ 
 O_{\epsilon}\left(X^{1/2+\epsilon}\right),
\end{equation}
for all complex numbers $s$ with $\re(s)=c$ and $|s|\leq T$.

 The proof of Theorem \ref{HJKMP} then follows along the same lines of the proof of Theorem \ref{Sound} by using \eqref{Moments2} instead of \eqref{Moments}.
 
 \end{proof}

\end{document}